\newtheorem{definition}{Définition}[section]
\newtheorem{proposition}[definition]{Proposition}
\newtheorem{lemma}[definition]{Lemme}
\newtheorem{theorem}[definition]{Théorème}
\newtheorem{corollary}[definition]{Corollaire}
\newtheorem*{theorem*}{Théorème}
\newtheorem*{proposition*}{Proposition}
\title[Points rationnels de la fonction Gamma d'Euler]{Points rationnels\\ de la fonction Gamma d'Euler}
\author{Etienne Besson}
\address{Institut Fourier, CNRS UMR 5582\\ Université Grenoble 1\\100 rue des mathématiques, BP 74\\ 38402 Saint-Martin d'Hères\\ France}
\email{etienne.besson@ujf-grenoble.fr}
\urladdr{http://www-fourier.ujf-grenoble.fr/~bessonet/}
\subjclass[2010]{11J72}
\newcommand{\C}{\mathbf{C}}
\newcommand{\Q}{\mathbf{Q}}
\newcommand{\N}{\mathcal{N}}
\renewcommand{\Re}{\operatorname{Re}}
\renewcommand{\Im}{\operatorname{Im}}
\begin{document}

\begin{abstract}
 En s'inspirant des méthodes employées par Masser pour la fonction zêta de Riemann dans son article [\og{}Rational values of the Riemann zeta function\fg{}, \textit{Journal of Number Theory} \textbf{131} (2011), 2037--2046], on démontre un nouveau lemme de zéros pour les polynômes en $z$ et $1/\Gamma(z)$ et on en déduit l'existence pour tout $n\geq2$ d'un réel positif $C(n)$ tel que le nombre de rationnels de dénominateur au plus $D\geq3$ dans l'intervalle $[n-1,n]$ et dont l'image par $\Gamma$ est également un rationnel de dénominateur au plus $D$ soit borné par $C(n)\frac{\log^2(D)}{\log\log(D)}$.
\end{abstract}

\maketitle{}

\section{Introduction}

L'objet de cet article est l'étude des points rationnels du graphe de la fonction Gamma d'Euler, définie comme le prolongement analytique de l'intégrale à paramètre $\Gamma(z)=\int_0^{+\infty}t^{z-1}e^{-t}\,\mathrm{d}t$. Plus précisément, on se place sur un intervalle $[n-1,n]$ de la demi-droite réelle positive et on s'intéresse au nombre $\N(D,n)$ de rationnels $p/q$ de dénominateur $q$ au plus $D$ contenus dans ce segment et tels que $\Gamma(p/q)$ soit également un rationnel de dénominateur au plus $D$.

Pour $D\geq2$, il existe moins de $D^2$ rationnels de dénominateur au plus $D$ dans l'intervalle $[n-1,n]$, donc $\N(D,n)$ est borné par $D^2$, mais on s'attend à un résultat très inférieur: en effet, la fonction Gamma envoie les entiers strictement positifs sur des entiers, mais on ne connaît aucun rationnel (réduit) $p/q$ avec $q\geq2$ tel que $\Gamma(p/q)$ soit rationnel. 

En revanche, il est connu que $\Gamma(\frac{1}{2})=\sqrt{\pi}$, est transcendant, ainsi que $\Gamma(\frac{1}{3})$ et $\Gamma(\frac{1}{4})$ (voir \cite{Chudnovsky}). On peut alors utiliser les équations fonctionnelles de $\Gamma$ pour construire de nombreuses autres valeurs transcendantes, par exemple $\Gamma(\frac{1}{6})$. Par contraste, on ne connaît pas encore la nature de $\Gamma(\frac{1}{5})$ (mais on sait qu'au moins deux nombres parmi $\pi$, $\Gamma(\frac{1}{5})$ et $\Gamma(\frac{2}{5})$ sont algébriquement indépendants, voir \cite[prop. 3.12.1]{Grinspan}).

On propose de démontrer, en suivant la méthode employée par Masser pour la fonction zêta de Riemann dans son article \cite{Masser}, les majorations suivantes:

\begin{theorem*}
 \begin{enumerate}
  \item Il existe une constante absolue $c>0$ telle que, pour tout couple d'entiers $n\geq2$ et $D\geq3$, le nombre $\N(D,n)$ de rationnels $z\in[n-1,n]$ de dénominateur au plus $D$ et tels que $\Gamma(z)$ soit également un rationnel de dénominateur au plus $D$ vérifie:
  \[\N(D,n)\leq c n^4\log^3(n)\frac{\log^2(D)}{\log\log(D)}.\]
  \item  Il existe une constante absolue $c^\prime>0$ telle que, pour tout couple d'entiers $n\geq2$ et $D\geq3$, le nombre $\N^\prime(D,n)$ de rationnels $z\in[n-1,n]$ de dénominateur au plus $D$ et tels que $1/\Gamma(z)$ soit également un rationnel de dénominateur au plus $D$ vérifie:
  \[\N^\prime(D,n)\leq c^\prime n^2\log^3(n)\frac{\log^2(D)}{\log\log(D)}.\]
 \end{enumerate}
\end{theorem*}

Il n'est pas surprenant d'obtenir des résultats de ce type. Par exemple, les résultats de l'article \cite{Surroca} de Surroca, valables pour une classe plus générale de fonctions, conduisent facilement pour la fonction Gamma à la borne $\N(D,n)\leq\delta\cdot(n\log(n)\log(D))^2$, avec une constante $\delta>0$ pouvant être choisie arbitrairement petite, mais seulement pour une sous-suite (non explicite) de valeurs de $D$.

Simultanément à la rédaction de cet article, Boxall et Jones ont également obtenu des résultats plus généraux dans \cite{BoxallJones}. En particulier, ils obtiennent une borne en $C\log^3(D)$ pour le nombre de points rationnels de dénominateur borné par $D$ de la restriction de la fonction Gamma à un compact quelconque (la constante $C$ dépendant du compact).

La stratégie employée ici suit la méthode de Masser et peut être résumée ainsi: dans la partie \ref{sectionlemmezeros}, on utilise les propriétés analytiques de la fonction entière $1/\Gamma$ pour démontrer un lemme de zéros pour les polynômes en $z$ et $1/\Gamma(z)$. Dans la partie \ref{sectiontheoreme}, on utilise la Proposition 2 de \cite{Masser} et le lemme de zéros pour en déduire un résultat plus général sur la répartition des points de degré et de hauteur fixés sur le graphe de $1/\Gamma$. Le théorème annoncé est alors un corollaire de ce résultat.

Je tiens à remercier Tanguy Rivoal, sans l'aide duquel cet article n'aurait pas pu voir le jour.

\section{Propriétés classiques de la fonction Gamma}

Les méthodes employées dans la suite s'appliquent à des fonctions entières avec une infinité de zéros \og{}bien distribués\fg{} en un certain sens, c'est pourquoi on va en général travailler avec $1/\Gamma$ plutôt qu'avec la fonction Gamma elle-même (sachant que déduire les points rationnels de $\Gamma$ de ceux de $1/\Gamma$ ne pose pas de problème).

Cette partie est consacrée à un bref rappel des propriétés de la fonction $1/\Gamma$ (on pourra se référer à \cite{WhittakerWatson} ou \cite{Campbell} pour plus de détails).

\begin{lemma}\label{defgamma}Pour tout $z\in\C$, on a \[\frac{1}{\Gamma(z)}=ze^{\gamma z}\prod_{n\geq1}\left(1+\frac{z}{n}\right)e^{-z/n},\]
 où $\gamma$ est la constante d'Euler-Mascheroni.
\end{lemma}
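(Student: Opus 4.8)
The plan is to obtain this Weierstrass product as a reformulation of Gauss's limit formula for the Gamma function. First I would recall that, for every $z\in\C\setminus\{0,-1,-2,\dots\}$,
\[
\Gamma(z)=\lim_{N\to\infty}\frac{N!\,N^{z}}{z(z+1)\cdots(z+N)};
\]
this is proved from the integral representation by writing $\Gamma(z)=\lim_{N}\int_0^N t^{z-1}(1-t/N)^N\,\mathrm dt$ for $\Re(z)>0$ (justified by dominated convergence using $0\le(1-t/N)^N\le e^{-t}$) and then evaluating the finite integral by $N$ successive integrations by parts, the general case following by analytic continuation. Taking reciprocals and writing $N^{z}=e^{z\log N}$ gives
\[
\frac{1}{\Gamma(z)}=\lim_{N\to\infty}z\,e^{-z\log N}\prod_{n=1}^{N}\Bigl(1+\frac{z}{n}\Bigr).
\]

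Next I would insert the trivial factor $e^{z\sum_{n=1}^N 1/n}e^{-z\sum_{n=1}^N 1/n}$ and regroup, rewriting the right-hand side as
\[
z\exp\!\Bigl(z\Bigl(\textstyle\sum_{n=1}^{N}\tfrac1n-\log N\Bigr)\Bigr)\prod_{n=1}^{N}\Bigl(1+\frac{z}{n}\Bigr)e^{-z/n}.
\]
Letting $N\to\infty$, the exponent tends to $\gamma z$ by the definition $\gamma=\lim_N(\sum_{n=1}^N 1/n-\log N)$ of the Euler--Mascheroni constant. For the remaining product I would verify that $\sum_{n\ge1}\bigl|\log(1+z/n)-z/n\bigr|$ converges locally uniformly on $\C$: once $|z/n|\le\frac12$ one has $\log(1+z/n)-z/n=O(|z|^2/n^2)$, so the partial products converge uniformly on compact sets to an entire function whose zeros are exactly $0,-1,-2,\dots$. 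Combining the two limits yields the identity for $z\notin\{0,-1,-2,\dots\}$; at $z=-m$ with $m\ge0$ both sides vanish (the left side because $\Gamma$ has a simple pole, the right side because of the factor $z$ when $m=0$ or $1+z/m$ when $m\ge1$), so the formula holds on all of $\C$ and in particular exhibits $1/\Gamma$ as an entire function.

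The only genuinely delicate point is the first step, the derivation of Gauss's formula from the integral definition — specifically the interchange of limit and integral and the bookkeeping of the iterated integration by parts. Everything after that is the elementary algebraic manipulation above, so I would treat the first step concisely and refer to \cite{WhittakerWatson} or \cite{Campbell} for the complete argument.
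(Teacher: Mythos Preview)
Your argument is correct and is essentially the classical derivation found in Whittaker--Watson. Note, however, that the paper does not prove this lemma at all: it is stated as a standard fact in the short section of preliminaries, with the remark that one may consult \cite{WhittakerWatson} or \cite{Campbell} for details. So there is no ``paper's own proof'' to compare against; your proposal simply supplies what the paper leaves to the references, and it does so along the same lines as those references.
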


On déduit de ce produit absolument convergent que $1/\Gamma$ est une fonction entière admettant un zéro d'ordre un en chacun des entiers négatifs ou nuls.

Le lien entre le comportement de $1/\Gamma$ dans la partie droite et dans la partie gauche du plan complexe est donné par la formule des compléments \cite[12.14]{WhittakerWatson}:
\begin{lemma}[Formule des compléments]
 Pour tout $z\in\C$, on a \[\frac{1}{\Gamma(z)\Gamma(1-z)}=\frac{\sin(\pi z)}{\pi}.\]
\end{lemma}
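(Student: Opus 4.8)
Le plan est de ramener cette formule au produit infini d'Euler pour le sinus, par l'intermédiaire du produit de Weierstrass du Lemme~\ref{defgamma}. Tout d'abord, j'écris la formule du Lemme~\ref{defgamma} en $z$ puis en $-z$ et je multiplie les deux égalités : le produit étant absolument convergent, les facteurs $e^{\gamma z}$ et $e^{-\gamma z}$ se compensent, ainsi que les facteurs $e^{-z/n}$ et $e^{z/n}$ de chaque terme, et il reste
\[\frac{1}{\Gamma(z)\,\Gamma(-z)}=-z^{2}\prod_{n\geq1}\left(1+\frac{z}{n}\right)\left(1-\frac{z}{n}\right)=-z^{2}\prod_{n\geq1}\left(1-\frac{z^{2}}{n^{2}}\right).\]

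J'invoque ensuite le produit infini d'Euler
\[\sin(\pi z)=\pi z\prod_{n\geq1}\left(1-\frac{z^{2}}{n^{2}}\right),\qquad z\in\C,\]
qui est classique (voir \cite{WhittakerWatson} ou \cite{Campbell}) : on peut l'établir par la factorisation de Hadamard, en observant que $z\mapsto\sin(\pi z)/(\pi z)$ se prolonge en une fonction entière d'ordre $1$, paire, valant $1$ en $0$ et dont les zéros sont exactement les entiers non nuls, tous simples, ou bien à partir du développement en série de $\pi\cot(\pi z)$. En le reportant dans l'égalité précédente, j'obtiens $\frac{1}{\Gamma(z)\Gamma(-z)}=-z^{2}\cdot\frac{\sin(\pi z)}{\pi z}=-\frac{z\sin(\pi z)}{\pi}$.

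Il ne reste plus qu'à passer de $\Gamma(-z)$ à $\Gamma(1-z)$. L'équation fonctionnelle $\Gamma(w+1)=w\,\Gamma(w)$ se déduit elle aussi du Lemme~\ref{defgamma} : en formant le quotient $\Gamma(z)/\Gamma(z+1)$, les produits partiels télescopent et le passage à la limite utilise $\sum_{n=1}^{N}\frac{1}{n}=\log N+\gamma+o(1)$ ; avec $w=-z$, elle donne $\Gamma(1-z)=-z\,\Gamma(-z)$. On en tire $\frac{1}{\Gamma(z)\Gamma(-z)}=\frac{-z}{\Gamma(z)\Gamma(1-z)}$, d'où, en comparant avec l'expression ci-dessus et en simplifiant par $-z$, l'identité $\frac{1}{\Gamma(z)\Gamma(1-z)}=\frac{\sin(\pi z)}{\pi}$ pour $z\notin\mathbf{Z}$, puis pour tout $z\in\C$ par continuité, les deux membres étant des fonctions entières. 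L'obstacle principal est le produit d'Euler pour le sinus : c'est le seul ingrédient qui ne se lit pas directement sur le Lemme~\ref{defgamma}, et une preuve entièrement autonome devrait le démontrer, par exemple via le théorème de factorisation de Hadamard.
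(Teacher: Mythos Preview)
Ta démonstration est correcte et suit exactement l'approche classique : produit de Weierstrass de $1/\Gamma$ en $z$ et en $-z$, identification avec le produit eulérien du sinus, puis passage de $\Gamma(-z)$ à $\Gamma(1-z)$ via l'équation fonctionnelle. Il n'y a rien à comparer, car l'article ne donne pas de démonstration de ce lemme : il se contente de renvoyer à \cite[12.14]{WhittakerWatson}, où la preuve est précisément celle que tu proposes.
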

Un développement asymptotique de la fonction pour des valeurs de $z$ éloignées de la demi-droite réelle négative est donné par la formule de Stirling \cite[12.33]{WhittakerWatson}:
\begin{lemma}[Formule de Stirling]
 Pour $z\in\C$, $|\arg(z)|\leq\delta<\pi$ et $|z|\to\infty$, on a:
 \[\frac{1}{\Gamma(z)}=\frac{1}{\sqrt{2\pi}}e^{z}z^{-z+1/2}\left(1+O\left(z^{-1}\right)\right).\]
\end{lemma}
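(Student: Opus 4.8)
The statement is classical and a complete proof is in \cite[12.33]{WhittakerWatson}; the route I would take to reconstruct it is as follows.

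\emph{Step 1: reduction to $\Gamma$.} It suffices to prove, with the principal branch of the logarithm (so that $z^{z-1/2}=\exp((z-\tfrac12)\log z)$ makes sense for $|\arg z|<\pi$),
\[\Gamma(z)=\sqrt{2\pi}\,z^{z-1/2}e^{-z}\bigl(1+O(z^{-1})\bigr)\qquad(|\arg z|\le\delta<\pi,\ |z|\to\infty).\]
Indeed, once $|z|$ is large enough that the error term has modulus $<\tfrac12$ one has $(1+O(z^{-1}))^{-1}=1+O(z^{-1})$, and taking reciprocals gives exactly $1/\Gamma(z)=\tfrac1{\sqrt{2\pi}}e^{z}z^{-z+1/2}(1+O(z^{-1}))$.

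\emph{Step 2: Binet's representation.} Applying the Euler--Maclaurin summation formula to $t\mapsto\log(z+t)$ in the identity $\log\Gamma(z+n)-\log\Gamma(z)=\sum_{k=0}^{n-1}\log(z+k)$ and letting $n\to\infty$ (the difference $\log\Gamma(z+n)-[(z+n-\tfrac12)\log(z+n)-(z+n)]$ converges because its increments are $O(|z+n|^{-2})$), one obtains, for $z\notin(-\infty,0]$,
\[\log\Gamma(z)=\Bigl(z-\tfrac12\Bigr)\log z-z+\tfrac12\log(2\pi)+\mu(z),\qquad \mu(z)=\int_0^{\infty}\frac{\tfrac12-\{t\}}{z+t}\,\mathrm{d}t,\]
the additive constant $\tfrac12\log(2\pi)$ being determined, e.g., by the duplication formula. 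The $\Gamma$-expansion of Step~1 is then precisely the exponential of $\mu(z)=O(z^{-1})$, so everything reduces to bounding $\mu$.

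\emph{Step 3: estimate of $\mu$, and the main obstacle.} Integrating by parts once, with $P(t)=\int_0^t(\tfrac12-\{s\})\,\mathrm{d}s$ periodic and $|P|\le\tfrac18$, yields $\mu(z)=\int_0^\infty P(t)(z+t)^{-2}\,\mathrm{d}t$, hence $|\mu(z)|\le\tfrac18\int_0^\infty|z+t|^{-2}\,\mathrm{d}t$. Writing $z=re^{i\theta}$, $|\theta|\le\delta$, the quadratic $|z+t|^2=t^2+2rt\cos\theta+r^2$ is $\ge r^2\sin^2(\pi-\theta)\ge r^2\sin^2(\pi-\delta)$ at its minimum, and the elementary evaluation $\int_0^\infty(t^2+2rt\cos\theta+r^2)^{-1}\,\mathrm{d}t=O_\delta(r^{-1})$ gives $\mu(z)=O_\delta(|z|^{-1})$ uniformly on the sector, which finishes the proof. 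The one delicate point is exactly this uniformity as $\arg z\to\pm\pi$: the saddle-point deformation of $\Gamma(z)=\int_0^\infty t^{z-1}e^{-t}\,\mathrm{d}t$ (which gives the $\Gamma$-expansion painlessly, via Laplace's method on the phase $\log u-u$, as long as $|\arg z|$ stays bounded away from $\tfrac\pi2$) degenerates there, and the reflection formula $\Gamma(z)\Gamma(1-z)=\pi/\sin(\pi z)$ is useless because $\sin(\pi z)$ has no Stirling-type asymptotic. The Binet route concentrates the whole difficulty into the scalar integral $\int_0^\infty|z+t|^{-2}\,\mathrm{d}t$, which is controlled on any fixed sub-$\pi$ sector but blows up like $1/\sin(\pi-\delta)$ and is not uniformly bounded on $\Re z\le 0$ — which is why the hypothesis $\delta<\pi$ is essential.
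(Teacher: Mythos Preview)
The paper does not prove this lemma; it is stated as a classical result with a citation to \cite[12.33]{WhittakerWatson}, so there is no in-paper argument to compare against. Your reconstruction via Binet's integral representation (obtained from Euler--Maclaurin) together with the bound $|\mu(z)|\le\tfrac18\int_0^\infty|z+t|^{-2}\,\mathrm{d}t=O_\delta(|z|^{-1})$ is correct and is exactly the route taken in that reference.

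One small inaccuracy worth flagging: the chain $r^2\sin^2(\pi-\theta)\ge r^2\sin^2(\pi-\delta)$ is false for $\theta$ near $0$ (take $\theta=0$). This does no damage, however, for two reasons: first, when $|\theta|\le\pi/2$ the minimum of $|z+t|^2$ over $t\ge0$ is attained at $t=0$ and equals $r^2$, not $r^2\sin^2\theta$, so your stated lower bound on the minimum is only meant for the regime $|\theta|>\pi/2$, where it is indeed correct; second, and more importantly, you never actually use this pointwise minimum to bound the integral --- the load-bearing step is the direct evaluation $\int_0^\infty(t^2+2rt\cos\theta+r^2)^{-1}\,\mathrm{d}t=O_\delta(r^{-1})$, which is valid on the whole sector. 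So the argument stands as written.
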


En particulier, la formule est valable dans tout le demi-plan $\Re(z)>0$. En utilisant la formule des compléments, on en déduit une seconde estimation, valable dans le demi-plan $\Re(z)<1$:

\begin{lemma}\label{stirling}
 Pour $z\in\C$, $|z|\to\infty$, on a:
 \[\frac{1}{\Gamma(z)}=\left\{\begin{array}{ll}
    \displaystyle\frac{e^{z}z^{-z+1/2}}{\sqrt{2\pi}}\left(1+O\left(z^{-1}\right)\right) & \text{ si }\Re(z)>0\\
    \displaystyle\sqrt{\frac{2}{\pi}}e^{z-1}(1-z)^{-z+1/2}\sin(\pi z)\left(1+O\left(z^{-1}\right)\right) & \text{ si }\Re(z)<1
                             \end{array}\right.\]
\end{lemma}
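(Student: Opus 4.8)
The plan is to derive the second estimate in the left half-plane $\Re(z)<1$ from the first one (valid everywhere in $\Re(z)>0$, which is already the classical Stirling formula quoted just above) by applying the complement formula. Concretely, set $w=1-z$; then $\Re(w)>0$ whenever $\Re(z)<1$, and $|w|\to\infty$ as $|z|\to\infty$ since $|w|\geq|z|-1$. The complement formula gives
\[
\frac{1}{\Gamma(z)}=\frac{\sin(\pi z)}{\pi}\,\Gamma(1-z)=\frac{\sin(\pi z)}{\pi}\,\Gamma(w),
\]
so it suffices to insert the classical Stirling expansion of $\Gamma(w)=1/\bigl(1/\Gamma(w)\bigr)$ for $\Re(w)>0$ and simplify.

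The main computational step is therefore to invert the first formula of the lemma applied at $w$: from $\frac1{\Gamma(w)}=\frac{1}{\sqrt{2\pi}}e^{w}w^{-w+1/2}\bigl(1+O(w^{-1})\bigr)$ one gets $\Gamma(w)=\sqrt{2\pi}\,e^{-w}w^{w-1/2}\bigl(1+O(w^{-1})\bigr)$, using that $1/(1+O(w^{-1}))=1+O(w^{-1})$ for $|w|$ large. Substituting $w=1-z$ yields
\[
\frac{1}{\Gamma(z)}=\frac{\sin(\pi z)}{\pi}\,\sqrt{2\pi}\,e^{-(1-z)}(1-z)^{(1-z)-1/2}\bigl(1+O(z^{-1})\bigr).
\]
It remains to rearrange the elementary factors: $\frac{\sqrt{2\pi}}{\pi}=\sqrt{2/\pi}$, $e^{-(1-z)}=e^{z-1}$, and $(1-z)^{(1-z)-1/2}=(1-z)^{-z+1/2}$, and to note that the error term $O(w^{-1})=O((1-z)^{-1})$ is indeed $O(z^{-1})$ since $|1-z|\geq|z|-1$. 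This gives exactly the stated expression
\[
\frac{1}{\Gamma(z)}=\sqrt{\frac{2}{\pi}}\,e^{z-1}(1-z)^{-z+1/2}\sin(\pi z)\bigl(1+O(z^{-1})\bigr).
\]

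I expect no serious obstacle here: the argument is a bookkeeping exercise in combining the complement formula with the classical Stirling expansion. The only mild subtlety is the bookkeeping on arguments and branches — one must make sure the power $(1-z)^{-z+1/2}$ is taken with the principal branch that is consistent with applying classical Stirling to $w=1-z$ in the half-plane $\Re(w)>0$ (where $|\arg w|<\pi/2$, well inside the sector $|\arg w|\leq\delta<\pi$ required by the previous lemma), and that the absorbed error terms are genuinely uniform for $|z|\to\infty$ regardless of the behaviour of $\sin(\pi z)$, which is legitimate because $\sin(\pi z)$ is simply carried along as an exact prefactor and not estimated.
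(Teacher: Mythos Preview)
Your proposal is correct and follows exactly the approach the paper indicates: the paper does not give a detailed proof of this lemma but simply states just before it that ``En utilisant la formule des compl\'ements, on en d\'eduit une seconde estimation, valable dans le demi-plan $\Re(z)<1$.'' Your computation carries out precisely this deduction---writing $\frac{1}{\Gamma(z)}=\frac{\sin(\pi z)}{\pi}\Gamma(1-z)$ and applying the classical Stirling expansion to $\Gamma(1-z)$ with $\Re(1-z)>0$---and your bookkeeping on the error term and the branch of the complex power is sound.
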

                             
De cette estimation, on tire une majoration de la croissance radiale de $1/\Gamma$:

\begin{lemma}\label{croissance}
  Il existe une constante $c$ telle que, pour tout réel positif $R$ assez grand, on ait $\left|1/\Gamma\right|_R\leq R^{cR}$, où $|1/\Gamma|_R$ désigne le maximum du module de $1/\Gamma$ sur le disque fermé centré en $0$ et de rayon $R$.
\end{lemma}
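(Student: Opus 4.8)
The plan is to deduce the bound from the asymptotic estimates of Lemma~\ref{stirling} together with the maximum modulus principle. Since $1/\Gamma$ is entire, the maximum $|1/\Gamma|_R$ is attained on the circle $|z|=R$, so it is enough to bound $|1/\Gamma(z)|$ when $|z|=R$ with $R$ large; in that regime $|z|\to\infty$, so the factors $1+O(z^{-1})$ appearing in Lemma~\ref{stirling} stay bounded, say by $2$.

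First I would split the circle $|z|=R$ into the two arcs $\Re(z)\geq 1/2$ and $\Re(z)\leq 1/2$, which together cover it, and apply the first estimate of Lemma~\ref{stirling} on the former (valid since $\Re(z)>0$) and the second on the latter (valid since $\Re(z)<1$). On $\Re(z)\geq 1/2$ one has $|e^{z}|=e^{\Re(z)}\leq e^{R}$, and, writing the power with the principal logarithm, $|z^{-z+1/2}|=\exp\bigl((\tfrac12-\Re(z))\log R+\Im(z)\arg(z)\bigr)$; since $\Re(z)\geq 0$, $\log R\geq 0$ and $|\arg(z)|\leq\pi/2$, the exponent is at most $\tfrac12\log R+\tfrac{\pi}{2}R$, so that $|1/\Gamma(z)|\leq e^{O(R)}$. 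On $\Re(z)\leq 1/2$ one has $|e^{z-1}|=e^{\Re(z)-1}\leq 1$ and $|\sin(\pi z)|\leq\cosh(\pi\Im(z))\leq e^{\pi R}$; setting $w=1-z$ one has $\Re(w)\geq 1/2$, $|w|\leq 1+R$ and $\Im(z)\arg(w)\leq 0$ (because $\Im(w)=-\Im(z)$ and $|\arg(w)|<\pi/2$), whence $|(1-z)^{-z+1/2}|=\exp\bigl((\tfrac12-\Re(z))\log|w|+\Im(z)\arg(w)\bigr)\leq\exp\bigl((\tfrac12+R)\log(1+R)\bigr)$, and therefore $|1/\Gamma(z)|\leq e^{O(R\log R)}$.

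Combining the two arcs, $|1/\Gamma(z)|\leq e^{O(R\log R)}$ uniformly on $|z|=R$ for $R$ large, and since $R^{cR}=e^{cR\log R}$ it suffices to take $c$ and $R$ large enough. There is no genuine obstacle here; the only point that requires a little care is the estimation of $|z^{-z+1/2}|$ and $|(1-z)^{-z+1/2}|$, where one must express the power as $\exp\Re\bigl((-z+1/2)\log(\cdot)\bigr)$ with the principal branch and keep track of the sign and size of the term $\Im(z)\arg(\cdot)$, using that the relevant argument remains in $(-\pi/2,\pi/2)$ on each arc.
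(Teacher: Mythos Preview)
Your argument is correct and follows essentially the same route as the paper's proof: both invoke the two Stirling-type estimates of Lemma~\ref{stirling} on the circle $|z|=R$, splitting into the right and left parts of the circle, and bound each factor $e^{z}$, $z^{-z+1/2}$ (resp.\ $(1-z)^{-z+1/2}$, $\sin(\pi z)$) separately. The only cosmetic differences are that you cut at $\Re(z)=1/2$ rather than using the overlapping regions $\Re(z)>0$ and $\Re(z)<1$, and that your more careful tracking of the sign of the argument terms even yields the sharper $e^{O(R)}$ on the right arc, which the paper does not bother to extract.
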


\begin{proof}
 D'après le lemme \ref{stirling}, pour $\Re(z)>0$ et $|z|=R$ assez grand, on a:
 \[\left|\frac{1}{\Gamma(z)}\right|\leq \left|e^{z}z^{-z+1/2}\right|\leq e^R R^{R+1/2} e^{\pi R/2}\leq R^{cR}.\]
 Par ailleurs, pour $\Re(z)<1$ et $|z|=R$ assez grand, on a:
 \[\left|\frac{1}{\Gamma(z)}\right|\leq\left|e^{z-1}(1-z)^{-z+1/2}\sin(\pi z)\right|\leq(1+R)^{R+1/2}e^{\pi R/2}\leq R^{c^\prime R}.\]

\end{proof}

\section{Lemme de zéros pour les polynômes en $z$ et $1/\Gamma(z)$}\label{sectionlemmezeros}

Dans toute la suite, on notera $G(z)=1/\Gamma(z)$ pour alléger les formules. L'objectif de cette partie est la démonstration de l'estimation suivante:
\begin{theorem}\label{lemmedezeros}
  Il existe une constante absolue $c$ telle que, pour tout entier $L\geq1$, pour tout réel $R\geq2$ et pour tout polynôme non nul $P(z,w)$ de degré au plus $L$ suivant chaque variable, la fonction $P(z,G(z))$ admet au plus $cL(L+R)\log(L+R)$ zéros (comptés avec multiplicité) dans le disque $|z|\leq R$.
\end{theorem}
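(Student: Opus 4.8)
The plan is to deduce the bound from a Poisson--Jensen comparison of the modulus of $F(z):=P(z,G(z))$ on two concentric circles, after isolating the two inputs this needs. Normalise $P$ so that its coefficient of largest modulus is $1$, write $F(z)=\sum_{j=0}^{L}p_j(z)G(z)^j$ with $\deg p_j\le L$, and put $\rho=4R$. The upper estimate is immediate from Lemma \ref{croissance}: on $|z|=\rho$ one has $|F(z)|\le (L+1)^2\rho^{L}\,|G|_{\rho}^{L}\le e^{c_1 L(L+R)\log(L+R)}$. Since $1/\Gamma$ has infinitely many zeros, $G$ is transcendental over $\C(z)$, so $F\not\equiv 0$; writing $a_tz^t$ for its leading Taylor term at $0$ and applying Jensen's formula on the circles of radii $R$ and $\rho$, the number $N$ of zeros of $F$ in $|z|\le R$ satisfies
\[
N\log 4\ \le\ \log|F|_{\rho}\ -\ \frac{1}{2\pi}\int_0^{2\pi}\log\bigl|F(Re^{i\theta})\bigr|\,d\theta .
\]
So everything reduces to showing that the average of $\log|F|$ on $|z|=R$ is at least $-c_2 L(L+R)\log(L+R)$; in view of Jensen it suffices to exhibit a single point $z_0$ with $|z_0|\le R$ (or $\le CR$) at which $|F(z_0)|\ge e^{-c_2 L(L+R)\log(L+R)}$.

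For this lower bound the key tool is the precise two-sided asymptotics of Lemma \ref{stirling}. On a circle $|z|=r$ with $2\le r\le R$ and $r$ large, $\log|G(z)|$ is, by the first formula, essentially $\Re\bigl(z-(z-\tfrac12)\log z\bigr)$ away from the negative real axis, hence runs roughly monotonically from about $-(r\log r-r)$ near the positive real axis to about $+(r\log r-r)$ near the negative real axis; by the second formula, for $z$ in the left half-plane at distance $\ge\frac14$ from the negative integers the modulus $|G(z)|$ is genuinely of the expected large size. Consequently, for $r$ of order $R$, one can realise \emph{any} prescribed value of $|G(z_0)|$ in the interval $[e^{-c r\log r},\,e^{c r\log r}]$ at a point $z_0$ with $|z_0|=r$ lying in the left half-plane and far from the zeros of $G$, and in fact along a whole real-analytic arc of such points. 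Now choose the index $j^\ast$ largest among those achieving $\|p_{j^\ast}\|=\max_j\|p_j\|=1$: inspecting the upper Newton polygon of the points $(j,\log\|p_j\|)$, the monomial of index $j^\ast$ strictly dominates all the others in $\sum_j\|p_j\|\,|G|^{j}$ for $|G|$ in some nonempty range, and by pushing $|G|$ a bounded factor (of order $L$) into that range one makes the $j^\ast$-term exceed the \emph{sum} of the others by a factor $\ge 2$. Pick $z_0$ on the available arc so that $|G(z_0)|$ sits in that sub-range and, simultaneously, $z_0$ avoids the $\le L$ zeros of $p_{j^\ast}$; a univariate interpolation (Lagrange) estimate on the single polynomial $p_{j^\ast}$, using $L+1$ nodes spread over a segment of length $O(R)$, forces $|p_{j^\ast}(z_0)|\ge e^{-c_3 L\log(L+R)}$ at such a node. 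Then $|F(z_0)|\ge \tfrac12|p_{j^\ast}(z_0)|\,|G(z_0)|^{j^\ast}\ge e^{-c_2 L(L+R)\log(L+R)}$, and feeding this into the Jensen inequality above gives $N\le c\,L(L+R)\log(L+R)$.

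The main obstacle is precisely this lower bound, and inside it the delicate point is the Newton-polygon bookkeeping: one must arrange a point at which one monomial beats not merely every other single term but their whole sum by a bounded factor, \emph{and} at which its coefficient polynomial is $\ge e^{-O(L\log(L+R))}$ (the size $e^{O(L\log(L+R))}$ of the interpolation constants for $L+1$ nodes on a segment of length $O(R)$ is exactly what the target bound can afford). This requires choosing both the radius $r\le R$ and the argument of $z_0$ with care, exploiting that the range of $\log|G|$ accessible on $|z|=r$ has width of order $r\log r$; one must also treat separately the regime where $r$ is too small (roughly $r\lesssim L$, where the negative real axis does not furnish enough room), for instance by a direct estimate on a bounded disc, where the bound $c\,L(L+R)\log(L+R)$ reduces to $c\,L^2\log L$. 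Once the lower bound is established in all regimes, the Poisson--Jensen inequality yields the theorem; the eventual transfer of such statements from $G=1/\Gamma$ to $\Gamma$ itself, needed in the next section, costs nothing here since the present statement is phrased directly for $G$.
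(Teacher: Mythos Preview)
Your overall architecture (Jensen on two circles, the upper bound via Lemma~\ref{croissance}, and the reduction to a pointwise lower bound for $|F|$) is fine and is essentially what the paper does as well. The gap is in the Newton--polygon step producing that lower bound. You pick $j^\ast$ as the largest index with $\|p_{j^\ast}\|=1$ and assert that, after pushing $t=|G|$ by a bounded factor, the term $\|p_{j^\ast}\|t^{j^\ast}$ exceeds twice the sum of the remaining $\|p_j\|t^{j}$. This is already false in easy examples: with $L=2$, $\|p_0\|=\|p_1\|=1$, $\|p_2\|=\tfrac12$, one has $j^\ast=1$, and $t\ge 2(1+\tfrac12 t^{2})$ has no real solution. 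More seriously, even when the $\|p_j\|$-polygon does give a dominance interval, what you ultimately need is dominance among the \emph{values} $|p_j(z_0)|t^{j}$; since $|p_j(z_0)|/\|p_j\|$ ranges between $0$ and roughly $(L+1)|z_0|^{L}$, you would need the $j^\ast$-term to win in the $\|p_j\|$-polygon by a factor of order $e^{cL\log(L+R)}$, and no vertex need achieve this for any $t$ in the accessible window. Your Lagrange estimate for $|p_{j^\ast}(z_0)|$ also presupposes $L+1$ well-spaced nodes lying on the prescribed level set $|G|=t$, which you have not produced. In short, a single evaluation point cannot see enough of the bivariate structure of $P$.

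The paper closes exactly this gap by replacing the one-point argument with a two-dimensional interpolation grid lying on the graph of $G$. Lemma~\ref{lemmetech4} shows that every small value $w$ has about $X$ preimages in the strip $\mathcal{Z}(X,1)$, and Lemma~\ref{wpoints} extracts from this $(L+1)$ well-separated values $w_0,\dots,w_L$ together with, for each $l$, $(L+1)$ preimages $z_{0,l},\dots,z_{L,l}$ sitting near distinct negative integers (hence with $\prod_{j\ne k}|z_{k,l}-z_{j,l}|\ge (L-1)!$). A \emph{double} Lagrange interpolation then writes every coefficient $a_{p,q}$ of $P$ as a controlled combination of the values $P(z_{k,l},w_l)=F(z_{k,l})$; the normalisation $\max|a_{p,q}|=1$ forces $\max_{k,l}|F(z_{k,l})|\ge e^{-cL(L+R)\log(L+R)}$, which is precisely the pointwise lower bound you were after. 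The preimage-counting Lemma~\ref{lemmetech4} is the key analytic input your sketch is missing.
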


De manière analogue au lemme de \cite{Masser} pour les polynômes en $z$ et $\zeta(z)$, on établit ce résultat par des techniques d'analyse complexe faisant intervenir la distribution des valeurs de la fonction $G$. Ceci diffère fondamentalement des lemmes de zéros obtenus par Gabrielov et Vorobyov dans \cite{GV} et utilisés notamment par Pila dans \cite{Pila3}, qui font intervenir des arguments d'analyse réelle et s'appliquent à des courbes, dites pfaffiennes, satisfaisant certaines équations différentielles algébriques -- alors qu'il est bien connu que la fonction Gamma n'est solution d'aucune telle équation différentielle (voir \cite{Campbell} ou \cite{WhittakerWatson}).

Dans toute la suite de cette partie, pour $X$ et $Y$ des réels supérieurs à $0$, on définit $\mathcal{Z}(X,Y)$ comme étant l'ensemble des $z=x+iy\in\C$ avec $-X\leq x\leq 1$ et $-Y\leq y\leq Y$.

Le lemme suivant nous donne la distribution dans $\mathcal{Z}(X,Y)$ des solutions des équations $G(z)=w$ pour les petites valeurs de $w$.
\begin{lemma}\label{lemmetech4}
 Il existe une constante absolue $r_0>0$ telle que, pour tout $w\in\C$ avec $|w|\leq r_0$ et pour $X\geq0$ tendant vers l'infini, le nombre $\N(X,w)$ de solutions (comptées avec multiplicité) à l'équation $G(z)=w$ avec $z\in\mathcal{Z}(X,1)$ vérifie
$\N(X,w)=X+O(1)$,
et cela uniformément en $w$.
\end{lemma}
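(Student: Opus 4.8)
The plan is to count the solutions of $G(z)=w$ in the closed rectangle $\mathcal{Z}(X,1)$ by the argument principle, comparing $G-w$ with $G$ itself through Rouché's theorem applied to the \emph{whole} boundary $\partial\mathcal{Z}(X,1)$ at once — not locally around the zeros of $G$, which would fail, since the zero of $G$ at $-k$ has derivative $(-1)^k/k!$ and the associated solution of $G(z)=w$ drifts far from $-k$ as $k$ grows, possibly wandering into a neighbouring unit cell. Concretely, I would first take $X$ to be a half-integer, $X=m+\tfrac12$, and prove that there is an absolute constant $\rho>0$ with $|G(z)|\geq\rho$ for every $z\in\partial\mathcal{Z}(X,1)$, once $m$ exceeds an absolute threshold. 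Setting $r_0=\rho/2$, Rouché's theorem then gives that $G-w$ and $G$ have the same number of zeros (with multiplicity) in $\mathcal{Z}(X,1)$ whenever $|w|\leq r_0$; since the zeros of $G$ are the simple zeros $0,-1,\dots,-m$ and none of them lies on $\partial\mathcal{Z}(X,1)$ (the boundary meets the real axis only at $-m-\tfrac12$ and at $1$), this common number is exactly $m+1=X+\tfrac12$, and this is uniform in $w$ because $w$ enters only through the constraint $|w|\leq r_0$.

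The main work — and the place where Lemma~\ref{stirling} is essential — is the lower bound $|G|\geq\rho$ on $\partial\mathcal{Z}(X,1)$. Write $z=x+iy$ with $|y|\leq1$ and put $t=1-x$. For $\Re(z)<1$ with $|z|$ large, the second line of Lemma~\ref{stirling} gives $|G(z)|=\sqrt{2/\pi}\,|e^{z-1}|\,|(1-z)^{-z+1/2}|\,|\sin(\pi z)|\,(1+O(z^{-1}))$; computing the modulus of $(1-z)^{-z+1/2}=u^{u-1/2}$ with $u=1-z$ (so $\Re u=t$, $|\Im u|\leq1$, $\log|u|\geq\log t$, $|\arg u|\leq 1/t$) yields $|e^{z-1}(1-z)^{-z+1/2}|\geq\tfrac12\,t^{t-1/2}e^{-t}$, and $t^{t-1/2}e^{-t}\to\infty$ by Stirling's formula for the factorial. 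Hence $|G(z)|\gg t^{t-1/2}e^{-t}\,|\sin(\pi z)|$ for $t$ beyond an absolute threshold $T$. On the left edge $x=-X$ one has $t=1+X$ huge and $|\sin(\pi z)|=\cosh(\pi y)\geq1$ (here the choice of half-integer $X$ is used), so $|G|$ is enormous there; on the parts of the edges $y=\pm1$ with $x\leq -T$ one has $t\geq T+1$ and $|\sin(\pi z)|\geq\sinh\pi$, so $|G|$ is again enormous; and the rest of the boundary — the segments $y=\pm1$, $-T\leq x\leq1$, and the right edge $x=1$, $|y|\leq1$ — is a fixed compact set, independent of $X$, containing no non-positive integer, on which the continuous function $G$ is nowhere zero and hence bounded below by a positive constant. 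Taking $\rho$ to be the minimum of $1$ and these finitely many compact-set bounds establishes the claim for all half-integer $X$ past an absolute threshold.

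Finally, for a general real $X$ one uses that $\mathcal{N}(X,w)$ is non-decreasing in $X$ (larger rectangle): sandwiching $X$ between its two nearest half-integers, which are both large once $X$ is, say $m_-+\tfrac12\leq X\leq m_++\tfrac12$ with $m_+-m_-\leq1$, gives $m_-+1\leq\mathcal{N}(X,w)\leq m_++1$, whence $\mathcal{N}(X,w)=X+O(1)$ with an absolute implied constant, uniformly in $w$. I expect the only genuinely delicate point to be the boundary lower bound of the second paragraph: one must make sure the $O(z^{-1})$ in Lemma~\ref{stirling} is uniform over $\mathcal{Z}(X,1)\cap\{|z|\geq T\}$ and that neither the threshold $T$ nor the compact-set constants depend on $X$ — which holds precisely because enlarging $X$ only appends boundary pieces on which $|G|$ is already large.
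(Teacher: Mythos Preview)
Your proposal is correct and follows essentially the same route as the paper: bound $|G|$ below on $\partial\mathcal{Z}(X,1)$ (right edge by direct estimate, horizontal edges via Stirling for $x\ll0$ and a compactness/direct argument near the origin, left edge via Stirling with $X$ kept away from integers), then compare $G-w$ to $G$ and sandwich for general $X$. The only cosmetic differences are that you invoke Rouché's theorem by name whereas the paper writes out the argument-principle computation $\N(X,0)-\N(X,w)=\frac{1}{2\pi}\Im\int_\gamma\frac{d}{dz}\log(1+w/G(z))\,dz=0$, and you take $X$ a half-integer while the paper uses the slightly looser condition $|X-n|\geq\tfrac14$.
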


Ce lemme est l'analogue pour $G$ de la proposition (22) de l'article \cite{BLL} de Bohr, Landau et Littlewood sur la distribution des solutions de $\zeta(s)=a$. Contrairement au cas de la fonction zêta, l'existence de la formule de Stirling nous permet d'estimer avec précision le comportement de la fonction, ce qui donne lieu à une démonstration beaucoup plus directe.

\begin{proof}
 Soit $w\in\C$. D'après le théorème des résidus, le nombre de solutions à l'équation $G(z)=w$ avec $z\in\mathcal{Z}(X,1)$ est donné par:
 \begin{equation}\label{residupourtech4}\N(X,w)=\frac{1}{2\pi}\Im\int_\gamma\frac{G^\prime(z)}{G(z)-w}\mathrm{d}z,\end{equation}
 où $\gamma$ est le contour de $\mathcal{Z}(X,1)$, orienté positivement et contournant les éventuels zéros de $G(z)-w$ par de petits arcs de cercle passant à l'extérieur de $\mathcal{Z}(X,1)$.

Remarquons que, d'après la proposition \ref{defgamma}, les solutions de l'équation $G(z)=0$ sont tous les entiers négatifs ou nuls, si bien qu'on a, pour $w=0$, l'estimation :
\begin{equation}\label{NX0}\N(X,0)=\frac{1}{2\pi}\Im\int_\gamma\frac{G^\prime(z)}{G(z)}\mathrm{d}z=X+O(1).\end{equation}

On va montrer que, pour peu qu'on choisisse $w$ de module inférieur à un $r_0$ qu'on déterminera par la suite, il n'y a jamais de zéros de $G(z)-w$ sur le segment vertical $\Re(z)=1$, ni sur les segments horizontaux $\Im(z)=\pm 1$. On va ensuite montrer que, quitte à ajuster légèrement la valeur de $X$, il n'y en a pas non plus sur le segment vertical $\Re(z)=-X$. On obtiendra alors le résultat souhaité en comparant les intégrales \eqref{residupourtech4} et \eqref{NX0}.

 \subsection*{Estimation de $G$ sur le segment $\Re(z)=1$.}

 Soit $z\in[1-i,1+i]$. On a
 \[G(z)=ze^{\gamma z}\prod_{n\geq1}\left(1+\frac{z}{n}\right)e^{-\frac{z}{n}},\]
 donc
 \[|G(z)|  = |z|e^{\gamma x}\prod_{n\geq1}\left|1+\frac{z}{n}\right|e^{-\frac{x}{n}}
           \geq |x|e^{\gamma x}\prod_{n\geq1}\left|1+\frac{x}{n}\right|e^{-\frac{x}{n}},\]
 où $x=\Re(z)=1$. On a:
 \begin{align*}
  \log\prod_{n\geq1}\left|1+\frac{x}{n}\right|e^{-\frac{x}{n}} 
         & = \sum_{n\geq1}\left(\log\left|1+\frac{x}{n}\right|-\frac{x}{n}\right)
          = \sum_{n\geq1}\left(\log\left(1+\frac{x}{n}\right)-\frac{x}{n}\right)\\
         & \geq -\frac{1}{2}\sum_{n\geq1}\left(\frac{x}{n}\right)^2
         \geq -\frac{x^2}{2}\zeta(2)
          > -1,
 \end{align*}
 d'où
 $|G(z)| \geq |x|e^{\gamma x}e^{-3} > \frac{1}{2}$.

 Ainsi, si on prend $|w|\leq r_0\leq\frac{1}{2}$, il n'y aura pas de zéro de $G(z)-w$ à contourner sur le segment $\Re(z)=1$.

 \subsection*{Estimation de $G$ sur les segments horizontaux $\Im(z)=\pm 1$.}

 La formule de Stirling pour le demi-plan $\Re(z)<1$ (lemme \ref{stirling}) nous permet d'obtenir des minorations valables pour $\Re(z)$ suffisamment éloignée de $0$. Resteront à considérer les points à distance bornée de l'origine, que l'on pourra traiter par un calcul direct.
 
 D'après le lemme \ref{stirling}, on a, pour $y=\Im(z)=\pm1$ et $x=\Re(z)<1$ tendant vers $-\infty$,
$G(z)=\sqrt{\frac{\pi}{2}}e^{z-1}(1-z)^{-z+1/2}\sin(\pi z)\left(1+O\left(z^{-1}\right)\right)$.

 En particulier, il existe $R\geq1$ tel que, pour $x\leq-R$, on ait:
\begin{align*}
 \left|G(z)\right| & \geq\frac{1}{2}\left|\sqrt{\frac{2}{\pi}}e^{z-1}(1-z)^{-z+1/2}\sin(\pi z)\right|\\
                   & \geq(2\pi)^{-1/2}e^{x-1}(1-x)^{-x+1/2}e^{y\arg(1-z)}\sinh|\pi y|\\
                   & \geq(2\pi e)^{-1/2}\left(\frac{1-x}{e}\right)^{-x+1/2}e^{y\arg(1-z)}\sinh|\pi y|\\ 
                   & \geq(2\pi e)^{-1/2}\sinh(\pi) e^{-\pi/2}\left(\frac{1+R}{e}\right)^{R+1/2}.
\end{align*}
Par ailleurs, pour $-R\leq x<1$, on revient au lemme \ref{defgamma}:
$G(z) = \allowbreak ze^{\gamma z}\prod_{n\geq1}\allowbreak \left(1+\frac{z}{n}\right)e^{-\frac{z}{n}}$,
donc

\begin{equation}\label{modulegamma}\left|G(z)\right| = |z|e^{\gamma x}\prod_{n\geq1}\left|1+\frac{z}{n}\right|e^{-\frac{x}{n}}\end{equation}

Ici, $x\in[-R,1[$ et $y$ est fixé égal à $\pm 1$ donc $z$ est borné. Soit $n_0$ un entier fixé supérieur à cette borne; alors, pour $n\geq n_0$, on a 
$\left|1+\frac{z}{n}\right| \geq \left|1+\frac{x}{n}\right| = 1+\frac{x}{n} \geq 0$,
d'où
\[\prod_{n\geq n_0}\left|1+\frac{z}{n}\right|e^{-\frac{x}{n}} \geq \prod_{n\geq n_0}\left(1+\frac{x}{n}\right)e^{-\frac{x}{n}}
                                                              \geq e^{-x^2/2\,\zeta(2)}
                                                              \geq e^{-R^2/2\,\zeta(2)}.\]

Et, pour $n<n_0$, on a $\left|1+\frac{z}{n}\right|\geq  \frac{|y|}{n}=\frac{1}{n}$, donc en réinjectant dans \eqref{modulegamma}, il vient
\[\left|G(z)\right| \geq e^{-\gamma R-R^2/2\,\zeta(2)}\prod_{1\leq n<n_0}\frac{1}{n}\,e^{-\frac{1}{n}}.\]

Au final, pour $y=\pm1$ et $-X\leq x< 1$, on a:

\[\left|G(z)\right| \geq
 \left\{\begin{array}{ll}
    \displaystyle (2\pi e)^{-1/2}\sinh(\pi) e^{-\pi/2}\left(\frac{1+R}{e}\right)^{R+1/2} & \text{si }-X\leq x\leq -R\\
    \displaystyle e^{-\gamma R-R^2/2\,\zeta(2)}\prod_{1\leq n<n_0}\frac{1}{n}\,e^{-\frac{1}{n}} & \text{si }-R\leq x<1.
 \end{array}\right.\]

Toutes ces estimations sont indépendantes de $X$, donc on peut choisir une fois pour toutes $r_0$ vérifiant simultanément les trois conditions suivantes:

\[r_0<\left\{\begin{array}{l}\displaystyle\frac{1}{2}\\
                                \displaystyle(2\pi e)^{-1/2}\sinh(\pi) e^{-\pi/2}\left(\frac{1+R}{e}\right)^{R+1/2}\\
                                \displaystyle e^{-\gamma R-R^2/2\,\zeta(2)}\prod_{1\leq n<n_0}\frac{1}{n}\,e^{-\frac{1}{n}}\end{array}\right.\]

Dans ce cas, pour tout $w$ de module inférieur à $r_0$, on est certain que l'équation $G(z)=w$ n'aura pas de solutions sur les trois côtés $\Re(z)=1$ et $\Re(z)=\pm1$ du rectangle $\mathcal{Z}(X,1)$, ce qui nous garantit que le chemin $\gamma$ suit ces trois segments sans contourner de points.

Reste maintenant à traiter le cas du dernier segment du bord de $\mathcal{Z}(X,1)$, celui donné par $x=-X$.

 \subsection*{Cas du segment vertical $\Re(z)=-X$.} D'après l'estimation de Stirling sur le demi-plan $\Re(z)<1$ (lemme \ref{stirling}), il existe $R$ tel que, pour $X\geq R$ on ait:

 \begin{align*}
  |G(-X+iy)| & \geq(2\pi)^{-1/2}e^{-X-1}(1+X)^{X+1/2}e^{y\arg(1+X-iy)}|\sin(\pi z)| \\
                       & \geq(2\pi)^{-1/2}\left(\frac{1+R}{e}\right)^{R+1/2}e^{-1/2-\pi/2}|\sin(\pi X)|,
 \end{align*}
 donc si $X$ n'est pas trop proche d'un nombre entier (disons, $|X-n|\geq\frac{1}{4}$ pour tout entier $n$), on a:
 \begin{equation}\label{minorationverticale}|G(-X+iy)|\geq (4\pi e)^{-1/2}e^{-\pi/2}\left(\frac{1+R}{e}\right)^{R+1/2}.\end{equation}
 Prenons donc $r_0$ vérifiant, en plus des conditions précédentes, la borne \eqref{minorationverticale}.

 \subsection*{Fin de la démonstration.} Pour $r_0$ vérifiant les conditions ci-dessus, $X$ assez grand tel que $|X-n|\geq\frac{1}{4}$ pour tout $n$ et $|w|\leq r_0$, d'après les formules \eqref{residupourtech4} et \eqref{NX0}, on a: 
 \begin{align*}
  \N(X,0)-\N(X,w) & = \frac{1}{2\pi}\Im\int_\gamma\frac{G^\prime(z)}{G(z)}-\frac{G^\prime(z)}{G(z)-w}\mathrm{d}z\\
             & = \frac{1}{2\pi}\Im\int_\gamma\frac{\mathrm{d}}{\mathrm{d}z}\left(\log\left(1+\frac{w}{G(z)}\right)\right)\mathrm{d}z
               = 0,
 \end{align*}
 et comme on a $\N(X,0)=X+O(1)$, on en déduit que $\N(X,w)=X+O(1)$.

 Pour généraliser le résultat à n'importe quel $X$ assez grand, on encadre $X$ par $X^-$ et $X^+$ tels que $|X^{\pm}-X|\leq\frac{1}{2}$ et que $|X^\pm-n|\geq\frac{1}{4}$ pour tout entier $n$. D'après le calcul ci-dessus, on a $\N(X^\pm,w)=X^\pm+O(1)=X+O(1)$, et $\N(X^-,w)\leq \N(X,w)\leq \N(X^+,w)$ donc, par encadrement, $\N(X,w)=X+O(1)$.\end{proof}

L'objet du lemme suivant est la construction d'un ensemble de points d'interpolation ayant de \og{}bonnes\fg{} propriétés d'espacement, c'est-à-dire tels qu'il sera facile d'estimer les facteurs apparaissant dans la formule d'interpolation de Lagrange.

\begin{lemma}\label{wpoints}
  Il existe des constantes $r_0$ et $c_0$ telles que, pour tout entier $L\geq2$, il existe des nombres complexes $w_l\ (l=0,\dots,L)$ et $z_{k,l}\ (k,l=0,\dots,L)$ vérifiant, pour tous $k,l$, $G(z_{k,l})=w_l$,
  avec:
  \begin{enumerate}
    \noindent\begin{minipage}[b]{0.5\linewidth}
    \item \label{wpetits}$|w_l|\leq r_0$ pour tout $l$;
    \end{minipage}
    \begin{minipage}[b]{0.45\linewidth}
    \item \label{wloin}$|w_l-w_i|\geq cL^{-1/2}$ pour tous $l\neq i$;
    \end{minipage}\hfill

    \noindent\begin{minipage}[b]{0.5\linewidth}
    \item \label{zloinde1}$|z_{k,l}-1|\geq 1$ pour tous $k,l$;
    \end{minipage}
    \begin{minipage}[b]{0.45\textwidth}
    \item \label{zpetits}$|z_{k,l}|\leq c_0L$ pour tous $k,l$;
    \end{minipage}\hfill
    \item \label{zloin}$\prod_{j\neq k}|z_{k,l}-z_{j,l}|\geq (L-1)!$ pour tous $k,l$.
  \end{enumerate}
\end{lemma}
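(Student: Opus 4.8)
The plan is to build the $w_l$ first, then extract the $z_{k,l}$ from Lemma~\ref{lemmetech4}. For the $w_l$, I would simply place $L+1$ points in a small disk with good mutual spacing: take $r_0$ as furnished by Lemma~\ref{lemmetech4} (shrinking it if necessary), and choose the $w_l$ on a $\lceil\sqrt{L+1}\rceil\times\lceil\sqrt{L+1}\rceil$ square grid inside the disk $|w|\le r_0$, with grid spacing of order $r_0 L^{-1/2}$. This immediately gives \eqref{wpetits} and \eqref{wloin} (with some absolute constant in place of the ``$c$'' in \eqref{wloin}, which I assume is a typo for $c_0$ or another absolute constant). The only care needed is that $r_0$ be small enough that all of these grid points satisfy $|w_l|\le r_0$ and that the grid has at least $L+1$ nodes, which is automatic for $L\ge 2$.

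Next, fix $l$ and apply Lemma~\ref{lemmetech4} to $w=w_l$: for $X$ large, the equation $G(z)=w_l$ has $X+O(1)$ solutions in the strip $\mathcal Z(X,1)$, uniformly in $w_l$. Because the $O(1)$ is uniform, there is an absolute constant $A$ such that for $X=A(L+1)$ the box $\mathcal Z(A(L+1),1)$ contains at least $L+1$ solutions of $G(z)=w_l$; call $L+1$ of them $z_{0,l},\dots,z_{L,l}$. Since these lie in $\mathcal Z(A(L+1),1)$ we have $\Re(z_{k,l})\le 1$ and $|\Im(z_{k,l})|\le 1$, so $|z_{k,l}|\le c_0 L$ for a suitable $c_0$, giving \eqref{zpetits}; and $|z_{k,l}-1|\ge|\Im(z_{k,l})|$ need not be $\ge 1$, so here one must be slightly more careful. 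The fix is to note that among the $X+O(1)$ solutions in $\mathcal Z(X,1)$, all but $O(1)$ of them (those near the real axis) actually satisfy $|\Im z|\le 1$ with $|z|$ large; more robustly, one discards the $O(1)$ solutions with $\Re(z)>-1$ (there are boundedly many since $G(z)=w_l$ with $|w_l|\le r_0$ forces $|z|$ large by the lower bounds on $|G|$ established in the proof of Lemma~\ref{lemmetech4} on the segment $\Re z=1$), enlarging $X$ by a bounded amount to retain $L+1$ of them. The retained points have $\Re(z_{k,l})\le-1$, hence $|z_{k,l}-1|\ge 2\ge 1$, giving \eqref{zloinde1}.

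For \eqref{zloin} I would use that the solutions of $G(z)=w_l$ in $\mathcal Z(X,1)$ are ``one per unit vertical strip'' by the $X+O(1)$ count in Lemma~\ref{lemmetech4}: applying the lemma to the boxes $\mathcal Z(m,1)$ and $\mathcal Z(m+1,1)$ shows each strip $-m-1\le\Re z\le-m$ contains $O(1)$ solutions, and by Stirling (Lemma~\ref{stirling}) the solutions in a fixed strip are well-separated from each other (the sine factor forces them near half-integers or controls their imaginary part), so one can select the $z_{k,l}$ so that $|z_{k,l}-z_{j,l}|$ is bounded below on the scale of $|k-j|$ up to an absolute constant. Concretely, after ordering the $z_{k,l}$ by real part one gets $|z_{k,l}-z_{j,l}|\ge c_1|k-j|$ for an absolute $c_1>0$, whence $\prod_{j\ne k}|z_{k,l}-z_{j,l}|\ge c_1^{L}\,k!\,(L-k)!\ge (L-1)!$ once $c_1$ is absorbed by taking slightly more widely spaced representatives (e.g. keeping only one solution out of every $\lceil 1/c_1\rceil$ consecutive ones, at the cost of enlarging $X$ by a constant factor). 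The main obstacle is precisely this quantitative separation of the $z_{k,l}$ within a vertical strip: Lemma~\ref{lemmetech4} gives the count but not the spacing, so one must feed the asymptotic $G(z)\sim\sqrt{\pi/2}\,e^{z-1}(1-z)^{-z+1/2}\sin(\pi z)$ back in to see that two solutions of $G(z)=w_l$ in the same strip cannot be too close — this is where the $\sin(\pi z)$ factor, and the smallness of $|w_l|\le r_0$, do the real work.
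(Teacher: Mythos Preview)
Your construction of the $w_l$ on a grid and the extraction of preimages via Lemma~\ref{lemmetech4} match the paper (which also notes that one should avoid the countably many $w$ with $G'=0$ at a preimage, so that the solutions are simple). The divergence is in condition~\ref{zloin}: you treat the spacing of the $z_{k,l}$ as the main obstacle and propose to recover it from Stirling plus a thinning step, whereas the paper reads more out of the \emph{proof} of Lemma~\ref{lemmetech4} than from its statement. That proof establishes the exact equality $\mathcal N(X,w)=\mathcal N(X,0)$ for every $X$ with $|X-n|\ge\tfrac14$; applying this with $X=m\pm\tfrac14$ shows there is \emph{exactly one} solution of $G(z)=w_l$ in each width-$\tfrac12$ cell around the negative integer $-m$. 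Your concern about several solutions in the same strip therefore evaporates, and with the $z_{k,l}$ selected in such cells around pairwise distinct integers the paper writes down the factorial-type bound for $\prod_{j\ne k}|z_{k,l}-z_{j,l}|$ directly---no second appeal to Stirling, no thinning. (Incidentally, small $|w|$ forces $|\sin(\pi z)|$ small in the Stirling asymptotic, so the solutions sit near integers, not half-integers as you wrote.) Your route can be made to work but is longer; the paper's localization insight yields condition~\ref{zloin} in one line and gives condition~\ref{zloinde1} for free since the chosen $z_{k,l}$ have $\Re z_{k,l}\le 0$.
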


\begin{proof}
 On considère le $r_0$ donné par le lemme \ref{lemmetech4}. Pour tout $|w|\leq r_0$, on a donc une estimation du nombre de solutions $z$ à l'équation $G(z)=w$ dans la région $\mathcal{Z}(X,1)$ (on va choisir $X$ convenable par la suite). Quitte à éliminer un nombre dénombrable de $w$, on peut supposer que ces solutions sont toutes de multiplicité $1$. En effet, s'il existe $z$ de multiplicité $\geq2$, alors on a $G^\prime(z)=0$, ce qui ne peut arriver que pour un nombre au plus dénombrable de $z$. Il suffit donc d'éviter les $w=G(z)$ correspondants, qui sont en nombre au plus dénombrable.
  
  Considérons donc un $w$ hors de cet ensemble dénombrable et de module inférieur à $r_0$. D'après le lemme \ref{lemmetech4}, il existe $c>0$ tel que, pour $X\geq cL$, on a au moins $L+1$ solutions $z_0,\dots,z_L$ (qui sont donc deux à deux distinctes) à l'équation $G(z)=w$ dans la bande $\mathcal{Z}(X,1)$. Ces solutions vérifient alors la condition \ref{zpetits} pour une certaine constante $c_0$. De plus, les $z_i$ sont pas trop proches les uns des autres. En effet, quitte à prendre pour $X$ un multiple suffisamment grand de $L$ (ce qui se traduit par une augmentation de $c_0$ dans \ref{zpetits}), on peut choisir les $z_i$ dans des disques de rayon $\frac{1}{4}$ centrés autour d'entiers négatifs deux à deux distincts. On va donc avoir, pour tout $k$:
  \[\prod_{j\neq k}|z_j-z_k| \geq \frac{1}{2}\times\left(1+\frac{1}{2}\right)\times\left(2+\frac{1}{2}\right)\times\cdots\times\left(L-1+\frac{1}{2}\right)\\
                             \geq (L-1)!,
  \]
ce qui démontre la condition \ref{zloin}. La condition \ref{zloinde1} est immédiate si $X$ est un assez grand multiple de $L$ (ce \og{}assez grand\fg{} étant uniforme en $w$).
  
  Pour démontrer le lemme, il suffit donc de choisir arbitrairement des $w_l$ dans le disque défini par la condition \ref{wpetits}, vérifiant la propriété de séparation \ref{wloin} et évitant un nombre au plus dénombrable de points pathologiques, ce qui ne pose aucune difficulté en choisissant $c$ suffisamment petit par rapport à $r_0$.\end{proof}

Passons à la démonstration du lemme de zéros proprement dit (le théorème \ref{lemmedezeros}).

\begin{proof}
  Quitte à multiplier $P$ par une constante, on peut supposer que le maximum des modules de ses coefficients est $1$. Posons $F(z) = P(z,G(z))$ et notons $z_1,\dots,z_N$ les zéros de $F$ comptés avec multiplicité. Considérons alors la fonction entière
  $\phi(z)=F(z)\prod_{n=1}^N(z-z_n)^{-1}$.
  
 Notons ${S}=R+c_0L$ pour le $c_0$ du lemme \ref{wpoints}. D'après le principe du maximum, on a:
 \begin{equation}\label{phirleqphi5r}|\phi|_{{S}}\leq|\phi|_{5{S}}.\end{equation}
 
 D'une part, d'après la proposition \ref{croissance}, $|G|_{5{S}}\leq (5{S})^{5c{S}}\leq{S}^{c^\prime{S}}$, d'où $|F|_{5{S}}\leq {S}^{c{S}L}$, puis:
 \begin{equation}\label{phi5r}|\phi|_{5{S}}\leq{S}^{c{S}L}(4{S})^{-N}.\end{equation}
 
 D'autre part, pour tout $z$ vérifiant $|z-1|\geq1$ et $|z|\leq{S}$, on aura
 $|F(z)| = |\phi(z)|\prod_{n=1}^N|z-z_n| \leq |\phi|_{{S}}(2{S})^N$.
 On déduit alors es inégalités \eqref{phirleqphi5r} et \eqref{phi5r} que $|F(z)|\leq{S}^{c{S}L}2^{-N}$, et ceci s'appliquant en particulier au cas des $z=z_{k,l}$ du lemme \ref{wpoints}, on a donc, pour tous $k,l=0,\dots,L$:
 \begin{equation}\label{majoreP}|P(z_{k,l},w_l)|\leq{S}^{c{S}L}2^{-N}.\end{equation}
  
  On va maintenant utiliser deux fois la formule d'interpolation de Lagrange pour les points du lemme \ref{wpoints}. D'abord:
  \[P(z,w)=\sum_{l=0}^L\left(\prod_{0\leq i\leq L, i\neq l}\frac{w-w_i}{w_l-w_i}\right)P(z,w_l),\]
  puis
  \[P(z,w_l)=\sum_{k=0}^L\left(\prod_{0\leq j\leq L, j\neq k}\frac{z-z_{j,l}}{z_{k,l}-z_{j,l}}\right)P(z_{k,l},w_l).\]
  Au final:
  \[P(z,w)=\sum_{k,l=0}^L\left(\prod_{0\leq i\leq L, i\neq l}\frac{w-w_i}{w_l-w_i}\cdot\prod_{0\leq j\leq L, j\neq k}\frac{z-z_{j,l}}{z_{k,l}-z_{j,l}}\right)P(z_{k,l},w_l).\]
  
  Le coefficient d'indices $(p,q)$ de $P$ est donc donné par:
  \begin{align*}a_{p,q}= \sum_{k,l=0}^L& \left((-1)^{p+q}\, \sigma_{L-p}(w_0,\dots,\widehat{w_l},\dots,w_L)\,\sigma_{L-q}(z_{0,l},\dots,\widehat{z_{k,l}},\dots,z_{L,l})\hspace{-3ex}\phantom{\prod_a}\right.\\
  & \left.\prod_{0\leq i\leq L, i\neq l}(w_l-w_i)^{-1}\prod_{0\leq j\leq L, j\neq k}(z_{k,l}-z_{j,l})^{-1}\right)P(z_{k,l},w_l),\end{align*}
  où $(x_0,\dots,\widehat{x_k},\dots,x_L)$ désigne le $L$-uplet obtenu en omettant l'entrée $x_k$ dans $(x_0,\dots,x_L)$ et où les $\sigma_p$ sont les fonctions symétriques élémentaires
  $\sigma_p(x_1,\dots,x_n)=\sum_{1\leq i_1<\cdots<i_p\leq n}x_{i_1}\cdots x_{i_p}$.
  
    On utilise maintenant les conditions du lemme \ref{wpoints} sur $w_l$ et $z_{k,l}$ ainsi que l'inégalité \eqref{majoreP} pour borner cette expression:
  \begin{align*}
    |a_{p,q}| & \leq (L+1)^2 \binom{L}{L-p}\binom{L}{L-q}\max_l|w_l|^{L-p}\max_{k,l}|z_{k,l}|^{L-q}\\
               & \qquad \left(\min_{i\neq l} |w_l-w_i|\right)^{-L} \min_{k,l}\left(\prod_{0\leq j\leq L, j\neq k}|z_{k,l}-z_{j,l}|\right)^{-1}|P(z_{k,l},w_l)|\\
              & \leq \frac{(L+1)^2\,2^{2L} r_0^{L-p}(c_0L)^{L-q}}{(cL^{-1/2})^{L}(L-1)!}{S}^{c{S}L}2^{-N}
               \leq L^{c_1L}{S}^{c{S}L}2^{-N}\\
              & \leq {S}^{c_2{S}L}2^{-N},
  \end{align*}
  car ${S}=R+c_0L$.
  
  Comme $P$ admet un coefficient de module $1$, on en déduit que $2^N\leq{S}^{c_2{S}L}$, d'où:
  \[N \leq c_3{S}L\log(S)\leq c_3(R+c_0L)L\log(R+c_0L)\leq cL(R+L)\log(R+L).\qedhere\]
\end{proof}

Cette borne ne semble pas tout à fait optimale; on s'attendrait plutôt à une borne en $cL(R+L)$, qui serait alors optimale relativement à chacun des paramètres $L$ et $R$:
\begin{itemize}
 \item pour $L\geq2$, si $P(z,w)$ est un polynôme de degré au plus $L$ en chaque variable, dire que $P(z,G(z))$ s'annule à l'ordre $k$ en $z=0$ revient à écrire un système linéaire homogène de $k$ équations à $(L+1)^2$ inconnues (les coefficients de $P$). Pour $k\leq(L+1)^2-1$, ce système possède des solutions non nulles, c'est-à-dire qu'il existe $P$ non nul de degré au plus $L$ en chaque variable tel que $P(z,G(z))$ ait un zéro d'ordre au moins $L^2+2L$ en $z=0$;
 \item pour $L$ fixé, on peut considérer le polynôme $P(z,w)=w^L$. Pour tout $R\geq3$, $P(z,G(z))$ possède approximativement $R$ zéros, chacun de multiplicité $L$, dans le disque de rayon $R$.
\end{itemize}

Le facteur $\log(R+L)$ supplémentaire reflète le relatif \og{}manque\fg{} de zéros de la fonction $G$. Dans \cite{Masser}, Masser arrivait à obtenir une borne optimale pour les polynômes en $z$ et $\zeta(z)$ en utilisant le fait qu'on peut trouver $R$ zéros de la fonction $\zeta$ dans un disque de rayon $c\frac{R}{\log(R)}$. Pour obtenir le même nombre de zéros pour la fonction $G$, il est nécessaire d'agrandir ce rayon jusqu'à $cR$, et c'est ce qui fait apparaître le facteur en trop. Il ne semble pas facile de contourner ce problème, mais on va voir dans la suite que son impact sur le résultat final n'est pas significatif.

Enfin, même si on ne s'en servira pas ici, notons que l'on peut déduire de la proposition une estimation similaire pour les zéros de $P(z,\Gamma(z))$:

\begin{corollary}
Il existe une constante absolue $c$ telle que, pour tout entier $L\geq1$, pour tout réel $R\geq2$ et pour tout polynôme non nul $P(z,w)$ de degré au plus $L$ suivant chaque variable, la fonction $P(z,\Gamma(z))$ admet au plus $cL(L+R)\log(L+R)$ zéros (comptés avec multiplicité) dans le disque $|z|\leq R$.
\end{corollary}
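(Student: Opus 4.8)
The plan is to deduce this from Theorem~\ref{lemmedezeros} by clearing denominators. Given a nonzero polynomial $P(z,w)$ of degree at most $L$ in each variable, say $P(z,w)=\sum_{i,j=0}^{L}a_{ij}z^iw^j$, I would introduce the reciprocal polynomial $Q(z,w)=w^LP(z,1/w)=\sum_{i,k=0}^{L}a_{i,L-k}z^iw^k$. Its coefficients are a permutation of those of $P$, so $Q$ is again a nonzero polynomial of degree at most $L$ in each variable. Since $\Gamma(z)=1/G(z)$, the polynomial identity $w^LP(z,1/w)=Q(z,w)$ specialises to the identity of meromorphic functions $G(z)^LP(z,\Gamma(z))=Q(z,G(z))$, whose right-hand side is in fact an entire function (a polynomial in $z$ and the entire function $G$).

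Next I would compare, at every point $z_0$ with $|z_0|\leq R$, the order of vanishing $\mathrm{ord}_{z_0}P(z,\Gamma(z))$ of the meromorphic function $P(z,\Gamma(z))$ with $\mathrm{ord}_{z_0}Q(z,G(z))$. If $z_0$ is not a negative integer or zero, then $G(z_0)\neq0$, so $G(z)^L$ is holomorphic and nonvanishing near $z_0$ and the two orders are equal. If $z_0=-m$ with $m$ a nonnegative integer, then by Lemma~\ref{defgamma} $G$ has a simple zero at $z_0$, hence $\Gamma$ has a simple pole there and $\mathrm{ord}_{z_0}P(z,\Gamma(z))\geq-L$, since $P$ has degree at most $L$ in $w$; consequently $\mathrm{ord}_{z_0}Q(z,G(z))=L+\mathrm{ord}_{z_0}P(z,\Gamma(z))\geq\mathrm{ord}_{z_0}P(z,\Gamma(z))$. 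In all cases, whenever $P(z,\Gamma(z))$ vanishes at $z_0$ the entire function $Q(z,G(z))$ vanishes there with order at least as large.

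Summing these inequalities over $|z_0|\leq R$, the number of zeros of $P(z,\Gamma(z))$ (counted with multiplicity, i.e.\ counting only the points where it actually vanishes) in the disk $|z|\leq R$ is bounded above by the number of zeros of $Q(z,G(z))$ in the same disk, which by Theorem~\ref{lemmedezeros} applied to the nonzero polynomial $Q$ is at most $cL(L+R)\log(L+R)$. This is exactly the asserted bound, and with the same constant $c$. The argument presents no real obstacle; the only point requiring attention is the bookkeeping of orders at the poles of $\Gamma$, where the hypothesis that the degree of $P$ in $w$ is at most $L$ is precisely what makes the comparison go through.
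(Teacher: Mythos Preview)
Your proof is correct and follows essentially the same approach as the paper: the paper writes the equivalent identity $P(z,\Gamma(z))=\Gamma(z)^{L}Q(z,G(z))$ and observes that the factor $\Gamma(z)^{L}$, having no zeros, cannot contribute any extra zeros, then applies Theorem~\ref{lemmedezeros} to $Q$. Your version is simply more explicit about the order comparison at the poles of $\Gamma$, which the paper leaves implicit.
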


\begin{proof}
 On peut écrire $P(z,\Gamma(z))=\Gamma(z)^LQ(z,G(z))$ avec $Q$ un polynôme de degré au plus $L$ en chaque variable. Il suffit alors d'appliquer le lemme de zéros au polynôme $Q$ (le facteur $\Gamma(z)^L$ ne faisant pas apparaître de zéros supplémentaires).
\end{proof}

\section{Points rationnels de la fonction Gamma}\label{sectiontheoreme}

Dans toute la suite, si $S$ est une partie finie de $\C^2$, on note $\omega(S)$ le minimum des degrés des courbes algébriques qui passent par tous les points de $S$. Si $\alpha$ est un nombre algébrique de degré $d$, de polynôme minimal $P(x)=a_dx^d+\cdots+a_0$, on pose
$H(\alpha)=\left(a_d\prod_{P(\alpha^\prime)=0}\max(1,|\alpha^\prime|)\right)^{1/d}$
la \emph{hauteur absolue non-logarithmique} de $\alpha$ (voir \cite{heights}). En particulier, si $\alpha=\frac{p}{q}$ est rationnel, alors $H(\alpha)=\max(|p|,|q|)$.

L'objectif de cette partie est de démontrer le théorème suivant, qui estime la répartition des points algébriques le long du graphe de la fonction $G$ en fonction de leur degré et de leur hauteur:

\begin{theorem}\label{maintheorem}
  Soient $d\geq1$  et $n\geq2$ des entiers et $H\geq 3$ un réel. Alors le nombre $\N(n,d,H)$ de nombres complexes $z$ vérifiant:
  \begin{enumerate}
  \noindent\begin{minipage}{0.45\textwidth}
    \item $|z-(n-\frac{1}{2})|\leq\frac{1}{2}$;
  \end{minipage}\hfill
  \begin{minipage}{0.45\textwidth}
    \item $[\Q(z,G(z)):\Q]\leq d$;
  \end{minipage}
  \item $H(z)\leq H$, $H(G(z))\leq H$,
  \end{enumerate}
  est majoré par $cn^2\log(n)\frac{\left(d^2\log(H)\right)^2}{\log(d\log(H))}$, où $c$ est une constante positive absolue.
\end{theorem}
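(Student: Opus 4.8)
The plan is to deduce Theorem~\ref{maintheorem} from the zero estimate of Theorem~\ref{lemmedezeros} by feeding it into the interpolation--determinant machinery of \cite{Masser}, used in the form of its Proposition~2.

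\emph{Recentring.} First I would bring the disc to a standard position. After an affine change of variable with rational coefficients --- replacing $z$ by $z_0=n-\tfrac12$ plus a suitable rescaling of a new variable $u$, and $G$ by the resulting $g(u)$ --- the function $g$ is still entire, hence analytic on whichever disc Proposition~2 demands; by Lemma~\ref{croissance} its maximum modulus there is $M\le|G|_{n+2}\le(n+2)^{c(n+2)}$, so $\log M\le c\,n\log n$ for $n\ge2$. A number $z$ counted by $\N(n,d,H)$ corresponds to a point $u$ in a fixed proportion of the radius, with unchanged field $\Q(u,g(u))=\Q(z,G(z))$ (the change of variable being rational), hence degree $\le d$, and with $H(u),H(g(u))\le c\,nH$, since affine maps with rational coefficients alter heights only by a factor controlled by $H(z_0)\le 2n$.

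\emph{The dichotomy.} Next I would apply Proposition~2 of \cite{Masser} to $g$, with an integer parameter $L\ge1$ at our disposal and with the zero estimate of Theorem~\ref{lemmedezeros} supplied for $R=n$ (legitimate, since $|z-z_0|\le\tfrac12$ forces $|z|\le n$ and $n\ge2$): in the underlying determinant argument the analytic size $M$ controls the upper bound for the interpolation determinant and the arithmetic data (degree $\le d$, heights $\le cnH$) control the lower bound. This produces a dichotomy: either $\N(n,d,H)$ is at most the explicit threshold of Proposition~2 --- a quantity polynomial in $L$, $\log M$ and $d\log(nH)$ whose leading term is the interpolation dimension $(L+1)^2$ --- or there is a nonzero polynomial $P(X,Y)$ of degree $\le L$ in each variable with $P(z,G(z))=0$ for every $z$ counted by $\N(n,d,H)$, in which case Theorem~\ref{lemmedezeros} gives $\N(n,d,H)\le c\,L(L+n)\log(L+n)$.

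\emph{Balancing and conclusion.} Finally I would take $L$ of order $d^{2}\log(nH)/\log(2+d^{2}\log(nH))$ (and $L=1$ if that is less than $1$), so that $L\log L\asymp d^{2}\log(nH)$ and hence $L^{2}\log L\asymp(d^{2}\log(nH))^{2}/\log(2+d^{2}\log(nH))$. Using $\log(nH)=\log H+\log n$, the fact that $(d^{2}\log H)^{2}/\log(d\log H)$ is bounded below by an absolute positive constant (because $H\ge3$ forces $\log(d\log H)\ge\log\log 3>0$), and elementary inequalities such as $(\log X)/X\le1/e$, a routine computation shows that in \emph{both} horns of the dichotomy the resulting bound --- be it $(L+1)^{2}$, $L\log M\asymp L\,n\log n$, $L\,d^{2}\log(nH)$, or $L(L+n)\log(L+n)$ --- is at most a constant multiple of $n^{2}\log(n)\,(d^{2}\log H)^{2}/\log(d\log H)$, which is the claimed estimate; small values of the parameters, where the growth bound of Lemma~\ref{croissance} would be used outside its stated range, only change the constant $c$. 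The main obstacle is the orchestration rather than any single step: recentring so that Proposition~2 applies with controllable constants, recalling its precise hypotheses and the exact form of its threshold, and tuning $L$ so that the two horns balance while the (admittedly generous) dependence on $n$ and $H$ still goes through. One subtlety I would simply take over from \cite{Masser} is that the degree-$d$ case, as opposed to the purely rational one, is accommodated by letting the field $\Q(z,G(z))$, of degree $\le d$, govern the Liouville-type lower bound for the interpolation determinant.
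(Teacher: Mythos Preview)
Your overall plan is right --- combine Masser's Proposition~2 (here Proposition~\ref{bombieripila}) with the zero lemma of Theorem~\ref{lemmedezeros} --- and the parameter sizes you aim for are of the correct order. But your description of how Proposition~\ref{bombieripila} enters does not match what it actually says, and the paper's route is accordingly more direct.

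First, no recentring is needed. The disc $|z-(n-\tfrac12)|\le\tfrac12$ already has diameter $1$, so Proposition~\ref{bombieripila} applies as stated with $A=1$, $f_1(z)=z$, $f_2(z)=G(z)$, and any $Z\ge n$; the bound $M\le Z^{cZ}$ comes from Lemma~\ref{croissance} applied on $|z|\le 2Z$. This sidesteps the height bookkeeping of your affine change of variable.

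Second, and more importantly, Proposition~\ref{bombieripila} is \emph{not} a dichotomy and carries no ``threshold'' of the form $(L+1)^2$ for the number of points. Its hypothesis is the purely numerical inequality
\[
(AZ)^T>(4T)^{96d^2/T}(M+1)^{16d}H^{48d^2},
\]
which does not involve $\N(n,d,H)$ at all, and its conclusion is unconditional: $\omega(\mathbf{f}(\mathcal{Z}))\le T$, i.e.\ a single nonzero polynomial of degree at most $T$ vanishes at every counted point. The paper therefore simply \emph{chooses} $Z$ of order $n+d\log H/\log(d\log H)$ and then $T$ of order $dZ$ so that this inequality holds; there is no second horn to balance against. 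Once the vanishing polynomial is in hand, Theorem~\ref{lemmedezeros} with $L=T$ and $R=n$ gives $\N(n,d,H)\le cT(T+n)\log(T+n)$, which unwinds to the stated bound.

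So the ingredients you list are exactly the right ones, but the architecture you propose --- recentre, then optimise $L$ to balance two horns --- is replaced in the paper by a single pass: pick $Z$ and $T$ to force the hypothesis of Proposition~\ref{bombieripila}, obtain the curve, and count its intersections with the graph of $G$ via the zero lemma.
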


La démonstration se déroule de la façon suivante. Soit $S\subset\C^2$ l'ensemble des couples $(z,G(z))$ où $z$ satisfait les conditions 1, 2 et 3. On va commencer par estimer $\omega(S)$, puis utiliser notre lemme de zéros (théorème \ref{lemmedezeros}) pour en déduire une borne sur le cardinal de $\mathcal{S}$.

Pour l'estimation de $\omega(S)$, on va utiliser le résultat suivant, qui est une reformulation par Masser d'un résultat dû à Bombieri et Pila \cite{bombieripila}:

\begin{proposition}[{\cite[Proposition 2]{Masser}}]\label{bombieripila}
  Soient $d\geq1$ et $T\geq\sqrt{8d}$ des entiers, $A>0$, $Z>0$, $M>0$ et $H\geq 1$ des réels, $f_1$ et $f_2$ des fonctions analytiques sur un voisinage ouvert du disque $|z|\leq 2Z$ et bornées par $M$ sur ce disque. Soit $\mathcal{Z}$ une partie finie de $\C$ vérifiant les propriétés suivantes:
  \begin{enumerate}
    \item $|z|\leq Z$ pour tout $z\in\mathcal{Z}$;
    \item $|z-z^\prime|\leq A^{-1}$ pour tous $z,z^\prime\in\mathcal{Z}$;
    \item $[\Q(f_1(z),f_2(z)):\Q]\leq d$ pour tout $z\in\mathcal{Z}$;
    \item $H(f_1(z))\leq H$, $H(f_2(z))\leq H$ pour tout $z\in\mathcal{Z}$.
  \end{enumerate}
  On suppose enfin que la condition suivante est vérifiée:
  \[(AZ)^T>(4T)^{96d^2/T}(M+1)^{16d}H^{48d^2}.\]
  Alors, en notant $\mathbf{f}=(f_1,f_2)$, on a $\omega(\mathbf{f}(\mathcal{Z}))\leq T$.
\end{proposition}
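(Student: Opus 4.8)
The plan is to prove the statement by the determinant method of Bombieri and Pila. Write $N=\binom{T+2}{2}$ for the number of monomials $x^iy^j$ with $i+j\le T$, and enumerate these exponent pairs as $(i_1,j_1),\dots,(i_N,j_N)$. If $\#\mathcal{Z}\le N-1$, then requiring a polynomial $Q(x,y)=\sum_b c_b x^{i_b}y^{j_b}$ of degree $\le T$ to vanish at every point of $\mathbf{f}(\mathcal{Z})$ amounts to fewer than $N$ linear conditions on the $N$ coefficients $c_b$, so a nonzero such $Q$ exists and $\omega(\mathbf{f}(\mathcal{Z}))\le T$. Hence one may assume $\#\mathcal{Z}\ge N$, and it suffices to show that for every choice of $N$ distinct points $z_1,\dots,z_N\in\mathcal{Z}$ the determinant
\[\Delta=\det\bigl(f_1(z_a)^{i_b}f_2(z_a)^{j_b}\bigr)_{1\le a,b\le N}\]
vanishes: indeed, this forces the $\#\mathcal{Z}\times N$ matrix $\bigl(f_1(z)^{i_b}f_2(z)^{j_b}\bigr)_{z\in\mathcal{Z},\,b}$ to have rank $<N$, so its columns satisfy a nontrivial linear relation, which is exactly a nonzero polynomial of degree $\le T$ vanishing on $\mathbf{f}(\mathcal{Z})$.

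Fix $z_1,\dots,z_N\in\mathcal{Z}$ and suppose $\Delta\ne 0$; I will derive a contradiction by playing an analytic upper bound against an arithmetic lower bound. For the upper bound, set $g_b(z)=f_1(z)^{i_b}f_2(z)^{j_b}$, analytic on a neighbourhood of $|z|\le 2Z$ with $|g_b|\le M^{i_b+j_b}\le (M+1)^T=:M'$ there. Newton's divided-difference form of Lagrange interpolation at the nodes $z_1,\dots,z_N$ factors $\bigl(g_b(z_a)\bigr)$ as $WD$, where $W_{a,c}=\prod_{m<c}(z_a-z_m)$ is lower triangular with $\det W=\prod_{m<a}(z_a-z_m)$ a Vandermonde determinant, hence of modulus $\le A^{-\binom{N}{2}}$ since all mutual distances are $\le A^{-1}$, and $D_{c,b}=g_b[z_1,\dots,z_c]$ is the divided difference, which a Cauchy-integral estimate over the circle $|z|=2Z$ bounds by $2M'/Z^{c-1}$. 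Expanding $\det D$ by rows then yields $|\Delta|\le N!\,(2M')^N(AZ)^{-\binom{N}{2}}$.

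For the lower bound, $\Delta$ is a nonzero algebraic number whose entries $f_1(z_a),f_2(z_a)$ have degree $\le d$ and height $\le H$. Multiplying the $a$-th row by $(a_ab_a)^T$, where $a_a,b_a$ are the leading coefficients of the minimal polynomials of $f_1(z_a),f_2(z_a)$ (each $\le H^d$), turns $\prod_a(a_ab_a)^T\,\Delta$ into a nonzero algebraic integer, whose norm therefore has absolute value $\ge 1$; bounding the remaining conjugates (whose coordinates still have height $\le H$) gives a Liouville-type inequality $|\Delta|\ge c(N,d)^{-1}H^{-\kappa}$ with $\kappa$ controlled in terms of $N,T,d$. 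Comparing the two estimates, and rewriting $\binom{N}{2}$ and the relevant powers of $T$ back in terms of $T$, the hypothesis $(AZ)^T>(4T)^{96d^2/T}(M+1)^{16d}H^{48d^2}$ is precisely what is needed to force $N!\,(2M')^N(AZ)^{-\binom{N}{2}}<c(N,d)^{-1}H^{-\kappa}$, a contradiction; the assumption $T\ge\sqrt{8d}$ enters at this comparison to absorb the combinatorial factors.

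The analytic half is routine complex analysis. The real obstacle is the arithmetic lower bound, because the field $\Q\bigl(f_1(z_1),f_2(z_1),\dots,f_1(z_N),f_2(z_N)\bigr)$ can have degree as large as $d^N$, so the crude Liouville inequality produces an exponent on $H$ that is exponential in $N$, hence in $T$ --- far too large to reach the stated $H^{48d^2}$. Getting the correct, polynomial-in-$d$ exponent is the heart of the Bombieri--Pila argument: one must arrange that the algebraic number whose size is estimated has degree bounded polynomially in $d$, for instance by replacing $\Delta$ by a Galois-symmetrised version so that a fixed power of it is rational, or by combining an arithmetic Hadamard bound for the Weil height of $\Delta$ with a sharp form of the Liouville inequality. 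Once the degree is under control, the remaining steps are the bookkeeping that produces exactly the numerical condition of the proposition.
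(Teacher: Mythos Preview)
The paper does not prove this proposition: it is quoted verbatim as \cite[Proposition~2]{Masser} and used as a black box. So there is no ``paper's own proof'' to compare against; your proposal is an attempt to reconstruct Masser's argument.

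As a reconstruction, your outline is on the right track --- the determinant method, the Vandermonde/divided-difference factorisation for the analytic upper bound, and a Liouville-type lower bound are indeed the ingredients --- but your write-up is not a proof, and you say so yourself. The entire content of the proposition lies in the arithmetic lower bound with exponent polynomial in $d$, and your final paragraph explicitly flags this as unresolved: you note that the naive Liouville inequality gives an exponent exponential in $N$, then say only that ``one must arrange'' for the degree to be polynomially bounded, mentioning Galois symmetrisation or a height/Hadamard argument without carrying either out. That is precisely the step that distinguishes Masser's Proposition~2 from the classical real-variable Bombieri--Pila lemma, and without it the displayed inequality $(AZ)^T>(4T)^{96d^2/T}(M+1)^{16d}H^{48d^2}$ cannot be recovered --- the exponents $16d$, $48d^2$, $96d^2/T$ and the threshold $T\ge\sqrt{8d}$ all come from that computation. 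As written, then, your proposal is a correct sketch of the easy half together with an accurate diagnosis of where the real work lies, but it stops short of doing that work.
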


\begin{proof}[Démonstration du théorème]
  Soit $T\geq\sqrt{8d}$ un entier dont on précisera le choix par la suite. On va appliquer la proposition \ref{bombieripila} à l'ensemble $\mathcal{Z}$ des points $z\in\C$ satisfaisant les hypothèses du théorème et aux fonctions entières $f_1(z)=z$ et $f_2(z)=G(z)$. On va avoir $\omega(\mathbf{f}(\mathcal{Z}))\leq T$, sous réserve que les conditions suivantes soient vérifiées:
  \begin{enumerate}
    \item $\mathcal{Z}$ est contenu dans un disque de diamètre $A^{-1}$;
    \item $[\Q(z,G(z)):\Q]\leq d$ pour tout $z\in \mathcal{Z}$;
    \item $H(z)\leq H$, $H(G(z))\leq H$ pour tout $z\in \mathcal{Z}$;
    \item $(AZ)^T>(4T)^{96d^2/T}(M+1)^{16d}H^{48d^2}$,
  \end{enumerate}
  où $Z\geq n$ et $M$ est un majorant de $|z|$ et de $|G(z)|$ sur le disque $|z|\leq2Z$.
  
  D'après les hypothèses du théorème, on a directement la condition 1 avec $A=1$, la condition 2 ainsi que la condition 3.
    D'après la proposition \ref{croissance}, on peut prendre $M\leq Z^{cZ}$ pour une certaine constante $c$, donc:
  \[    (4T)^{96d^2/T}(M+1)^{16d}H^{48d^2} \leq (4T)^{96d^2/T}Z^{c^\prime d Z}H^{48d^2}
                                       \leq c^{d^2}Z^{c^\prime d Z}H^{48d^2}\]
  et, pour que la condition 4 soit réalisée, il suffit donc d'avoir
  $cd^2+(cdZ-T)\log(Z) + 48d^2\log(H)\leq 0$.
  Si on prend $T\geq2cdZ$, il suffit désormais d'avoir $d(c+48\log(H))\leq cZ\log(Z)$, ce qui se ramène à:
  \begin{equation}\label{choixdeZ}cd\log(H)\leq Z\log(Z),\end{equation}
  ce qui peut être obtenu en choisissant $Z$ supérieur à un multiple constant assez grand de $\frac{d\log(H)}{\log(d\log(H))}$. En effet, si $Z\geq\lambda\frac{d\log(H)}{\log(d\log(H))}$, alors:
  \begin{align*}
    Z\log(Z) & \geq \lambda\frac{d\log(H)}{\log(d\log(H))}\left(\log(d\log(H)) + \log(\lambda)- \log\log(d\log(H))\right)\\
             & \geq \lambda d\log(H)\left(1+\frac{\log(\lambda)}{\log(d\log(H))} - \frac{\log\log(d\log(H))}{\log(d\log (H))}\right)\\
             & \geq \lambda d\log(H)\left(1- \frac{\log\log(d\log(H))}{\log(d\log(H))}\right)
             \geq \lambda d\log(H)\left(1-\frac{1}{e}\right).
  \end{align*}
  
  On pose donc $Z=\lambda\frac{d\log(H)}{\log(d\log(H))}+n$ pour une valeur de $\lambda$ suffisamment grande, de façon à assurer simultanément \eqref{choixdeZ} et la condition $Z\geq n$. On prend alors pour $T$ un entier supérieur ou égal à $2cdZ$, mais du même ordre de grandeur (par exemple, la partie entière supérieure de $2cdZ$). La condition $T\geq\sqrt{8d}$ sera alors vérifiée, quitte à augmenter encore la valeur de $\lambda$.
  
  On peut alors appliquer la proposition \ref{bombieripila}, qui nous dit que $\omega(\mathbf{f}(\mathcal{Z}))\leq T$, c'est-à-dire qu'il existe un polynôme $P$ en deux variables, de degré total au plus $T$, tel que tous les points de $\mathbf{f}(\mathcal{Z})$ soient des zéros de $P$. Autrement dit, pour tout $z$ vérifiant les hypothèses du théorème, on a $P(z,G(z))=0$.
  
  Le degré de $P$ en chaque variable est majoré par $T$, donc on peut utiliser le théorème \ref{lemmedezeros} pour borner le nombre de zéros de $P(z,G(z))$ dans un disque de rayon $n$, donc a fortiori le nombre de points de l'ensemble $\mathcal{Z}$, c'est-à-dire les points satisfaisant les hypothèses du théorème:
  \begin{align*}
    \N(n,d,H) & \leq cT(T+n)\log(T+n)\\
             & \leq c\left(\frac{d^2\log(H)}{\log(d\log(H))}+n\right)^2\log\left(\frac{d^2\log(H)}{\log(d\log(H))}+n\right)\\
             & \leq c (n^2\log(n))\frac{\left(d^2\log(H)\right)^2}{\log(d\log(H))}.\qedhere
  \end{align*}  
\end{proof}

Si l'on parvenait à supprimer le facteur $\log(L+R)$ dans la proposition \ref{lemmedezeros}, cela se traduirait ici par un dénominateur en $(\log(d\log(H)))^2$ au lieu de $\log(d\log(H))$.

On peut maintenant démontrer les résultats annoncés comme corollaires du théorème \ref{maintheorem}.

\begin{proof}
 \begin{enumerate}
  \item Si $z$ est un rationnel compris entre $n-1$ et $n$ et de dénominateur au plus $D$, le numérateur de $z$ est au plus $nD$ donc on a $H(z)\leq nD$. Par ailleurs, $0\leq \Gamma(z)\leq\Gamma(n)=(n-1)!$ donc $H(G(z))\leq (n-1)!D$.
  
  On applique donc le théorème \ref{maintheorem} avec $d=1$ et $H=(n-1)!D$:
  \begin{align*}
    \N(D,n) & \leq c (n^2\log(n))\frac{\log^2 ((n-1)!D)}{\log\log ((n-1)!D)}\\
           & \leq c (n^2\log(n))\frac{(n\log(n))^2\log^2 (D)}{\log\log (D)}
            \leq c (n^4\log^3(n))\frac{\log^2(D)}{\log\log(D)}.
  \end{align*}
  
  \item On fait le même calcul, mais $G$ étant bornée par $2$ sur $[1,+\infty[$, la borne pour le numérateur de $G(z)$ est cette fois $2D\leq nD$, donc on applique le théorème avec $H=nD$, ce qui donne une borne en $c^\prime n^2\log^3 (n)\frac{\log^2(D)}{\log\log(D)}$. \end{enumerate}
\end{proof}

\nocite{*}

\bibliographystyle{smfalpha}
\bibliography{biblio}

\end{document}